\DeclarePairedDelimiter\abs{\lvert}{\rvert}%
\DeclarePairedDelimiter\norm{\lVert}{\rVert}%
\let\oldabs\abs
\def\abs{\@ifstar{\oldabs}{\oldabs*}}
\let\oldnorm\norm
\def\norm{\@ifstar{\oldnorm}{\oldnorm*}}
\g@addto@macro\bfseries{\boldmath}
\newcommand{\A}{\mathcal{A}}
\newcommand{\C}{\mathcal{C}}
\newcommand{\M}{\mathcal{M}}
\newcommand{\T}{\mathbb{T}}
\newcommand{\Ca}{\mathcal{C}}
\newcommand{\K}{\mathcal{K}}
\newcommand{\z}{\zeta}
\newcommand{\conj}[1]{\overline{#1}}
\newcommand{\D}{\mathbb{D}}
\newcommand{\cD}{\conj{\mathbb{D}}}
\newcommand{\m}{\textit{m}}
\newcommand{\hd}{\textbf{Hol}(\D)}
\renewcommand\Re{\operatorname{Re}}
\newtheorem{thm}{Theorem}[section]
\newtheorem{cor}[thm]{Corollary}
\newtheorem{prop}[thm]{Proposition}
\theoremstyle{definition}
\theoremstyle{definition}
\begin{document}
\title{\textbf{On model spaces and density of functions smooth on the boundary}}
\date{ }
\author{Adem Limani and Bartosz Malman}
%\author[1]{Adem Limani}
%\author[2]{Bartosz Malman}
%\affil[1]{Lund University, Lund, Sweden}
%\affil[2]{KTH Royal Institute of Technology, Stockholm, Sweden}
\maketitle

\newcommand{\Addresses}{{% additional braces for segregating \footnotesize
		\bigskip
		\footnotesize
		
		Adem Limani, \\ \textsc{Centre for Mathematical Sciences, Lund University, \\
			Lund, Sweden}\\
		\texttt{adem.limani@math.lu.se}
		
		\medskip
		
		Bartosz Malman, \\ \textsc{KTH Royal Institute of Technology, \\
			Stockholm, Sweden}\\
			\texttt{malman@kth.se}
		
	}}

\begin{abstract}
\noindent 
We characterize the model spaces $K_\Theta$ in which functions with smooth boundary extensions are dense. It is shown that such approximations are possible if and only if the singular measure associated to the singular inner factor of $\Theta$ is concentrated on a countable union of Beurling-Carleson sets. In fact, we use a duality argument to show that if there exists a restriction of the associated singular measure which does not assign positive measure to Beurling-Carleson sets, then even larger classes of functions, such as H\"older classes and large collections of analytic Sobolev spaces, fail to be dense. In contrast to earlier results on density of functions with continuous extensions to the boundary in $K_\Theta$ and related spaces, the existence of a smooth approximant is obtained through a constructive method.

%These results are reminiscent of %However, our result shows that if the measure defining the singular inner factor of $\Theta$ places a lot of its mass on such Beurling-Carleson sets, then an approximation is possible, by a function with boundary values which are smooth. 
 \end{abstract}

\section{Introduction and main results}

A special feature of the spaces of analytic functions which appear in several operator model theories is that it is often difficult to obtain any explicit example of a non-trivial function that is contained in such a space and has boundary values of some degree of regularity, be it continuous, differentiable or smooth. This principle certainly applies to the classical \emph{model spaces} $K_\Theta$, where the identifiable elements of the space are often limited to the reproducing kernel functions. However, being subspaces of the Hardy space, a function in $K_\Theta$ behaves at least somewhat well on the boundary, in the sense that it admits an almost everywhere defined boundary function. Our goal here is to carry out the investigation of how well such a boundary function can behave for a dense subset of the model space, and more precisely we will focus on differentiable boundary functions.

We will work in the open unit disk $\D$ of the complex plane $\mathbb{C}$, and our boundary functions will thus be living on the unit circle $\T$. We assume the readers familiarity with the usual facts regarding the Hardy spaces $H^p$ on $\D$ and the boundary behaviour of functions in these spaces (all of it can, for instance, be found in \cite{garnett}). In this setting, a model space is constructed in the following way. Let $\nu$ be a finite positive singular Borel measure on $\T$ and construct the associated singular inner function 
\begin{equation} \label{innerformula} S_\nu(z) = \exp\Big(-\int_\T \frac{\zeta + z}{\zeta - z} d\nu(\z)\Big). 
\end{equation}
\noindent
Pick also an arbitrary convergent Blaschke product 
\[ B(z) = \prod_{n=1}^\infty \frac{|a_n|}{a_n} \frac{a_n-z}{1-\conj{a_n}z},
\] 
and set $\Theta := BS_\nu$. The space $\Theta H^2 = \{ \Theta f : f \in H^2 \}$ is then a closed subspace of $H^2$, and the model space is defined as its orthogonal complement: 
\[K_\Theta = H^2 \ominus \Theta H^2.
\] The reproducing kernel of the space is given by 
\begin{equation*}\label{reprodk1} k_\Theta(\lambda,z) = \frac{1-\conj{\Theta(\lambda)}\Theta(z)}{1-\conj{\lambda}{z}}. 
\end{equation*} 
It is clear from this expression that the kernel functions inherit the boundary behaviour of the inner function $\Theta$, which can in general be very irregular. 

During his investigation of the boundary behaviour of model space functions on very fine sets, Aleksandrov established in \cite{aleksandrovinv} the density of the intersection $\A \cap K_\Theta$ in $K_\Theta$, where $\A$ denotes the disk algebra, the algebra of analytic functions in $\D$ with continuous extensions to the boundary. It has been later found in \cite{comptesrenduscont} and \cite{jfabackshift} that Aleksandrov's result admits a generalization of to a wider class of spaces, with proofs using non-constructive approaches based on duality.
\noindent
A natural question is if approximation by functions extending continuously to the boundary is close to the best that one can possibly obtain for a general model space. Similarly, one can ask: what conditions need to be met in order to allow approximation by some more regular class of functions?

Let $E \subset \T$ be a closed set of Lebesgue measure zero and consider the complementary open set $\T \setminus E = U$, which can be written as a countable union $U = \cup_k I_k $ of disjoint maximal open circular arcs $I_k$. Let $|I_k|$ denote the length of the arc $I_k$. The set $E$ is a \emph{Beurling-Carleson} set if the following entropy condition is satisfied:
\begin{gather} \label{thincond} \sum_k |I_k|\log \left( \frac{1}{  |I_k|} \right) < \infty.
\end{gather}
\noindent
Beurling-Carleson sets are characterized as the zero sets on $\T$ of analytic functions on $\D$ with very regular extensions to the boundary. Indeed, Carleson proved in \cite{carlesonuniqueness} that for any such set $E$ and any positive integer $n$, there exists an analytic function with zero set precisely $E$ such that all its derivatives up to order $n$ extend continuously to the boundary. We will denote by $\A^{n}$ the class of analytic functions on $\D$ with derivatives of order $n$ extending continuously to the boundary, and  we also define
\[ \A^{\infty} := \bigcap_{n=1}^\infty \A^{n}.
\] 
Novinger in \cite{novinger1971holomorphic} and Taylor and Williams in \cite{taylor1970ideals} extended Carleson's result and showed that Beurling-Carleson sets are also zero sets of functions in $\A^{\infty}$. For $0<p\leq \infty$, we will also need to define the analytic Sobolev spaces $W^{1,p}_{a}$ on the unit disc $\D$. That is, 
\begin{equation} \label{sobolevdef}
W^{1,p}_{a} = \{f\in \hd: f, f' \in L^{p}(\D, dA) \},
\end{equation} 
where $dA$ denotes the normalized area-measure on $\D$. We regard these spaces as closed subspaces of the classical Sobolev spaces on $\D$, equipped with the standard Sobolev metric $\| f\|_{W^{1,p}} := \|f \|_{L^p} +\|f'\|_{L^p}$. 

Dyakonov and Khavinson found in \cite{starinvsmooth} that there exists model spaces $K_\Theta$ with the property that $W^{1,p}_{a} \cap K_\Theta = \{0\}$, for any $p > 1$. According to their result, this happens precisely when $\Theta = S_{\nu}$ and the singular measure $\nu$ has the property that $\nu(E) = 0$ for any Beurling-Carleson set $E$. Conversely, they show that the model space $K_\Theta$ contains a non-zero function in $\A^\infty$ if either $\Theta$ has a Blaschke factor or if $\nu(E) > 0$ for some Beurling-Carleson set $E$. It is easy to see that any finite positive singular Borel measure $\nu$ on $\T$ can be expressed as a sum  \begin{equation} \label{nu-decomp}
\nu= \nu_{\Ca} + \nu_{\K}
\end{equation}
where the two measures are mutually singular, $\nu_\Ca$ is essentially concentrated on Beurling-Carleson sets, in the sense that there exists an increasing sequence of Beurling-Carleson sets $\{E_n\}_{n \geq 1}$ such that $\lim_{n \to \infty} \nu_\Ca(E_n) = \nu_\Ca(\T)$, while $\nu_K(E) = 0$ for any Beurling-Carleson set. Measures which do not assign positive measure to Beurling-Carleson sets appear notably in the work of Korenblum in \cite{korenblum1981cyclic} and Roberts in \cite{roberts1985cyclic}. For these reasons, in the decomposition in \eqref{nu-decomp}, we will refer to the measure $\nu_\Ca$ as the \emph{Beurling-Carleson part}, and the measure $\nu_K$ as the \emph{Korenblum-Roberts part}. The main result of our investigation is the following.

\begin{thm} \thlabel{maintheorem}
Let $\Theta= BS_{\nu}$ be an inner function with Blaschke product $B$ and singular inner function $S_\nu$ with associated singular measure $\nu$ on $\T$. %If $\nu= \nu_{\K} + \nu_{\Ca}$ denotes the decomposition according to Section \ref{Decomp}, 
Then the following conditions are equivalent:
\begin{itemize}
\item[(i)]  $\A^{\infty} \cap K_\Theta$ is dense in $K_\Theta$,
\item[(ii)] $W^{1,p}_a \cap K_\Theta$ is dense in $K_\Theta$, for any $1<p\leq \infty$,
\item[(iii)] the Korenblum-Roberts part $\nu_{\K}$ of $\nu$ is trivial. That is, $\nu= \nu_\Ca$.
\end{itemize}
\end{thm}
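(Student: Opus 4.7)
The plan is to establish the chain (i)$\Rightarrow$(ii)$\Rightarrow$(iii)$\Rightarrow$(i). The implication (i)$\Rightarrow$(ii) is immediate, since $\A^\infty$ embeds continuously into each $W^{1,p}_a$ with a topology finer than that of $H^2$, and density in the weaker norm is preserved.

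For (ii)$\Rightarrow$(iii) I would argue the contrapositive. The space $W^{1,p}_a \cap K_\Theta$ is invariant under the backward shift $S^\ast$ (both factors are), so its $H^2$-closure has the form $K_{\Theta'}$ for some inner divisor $\Theta' \mid \Theta$. The goal is to identify $\Theta' \mid BS_{\nu_\Ca}$, which is a proper divisor of $\Theta$ as soon as $\nu_\K \neq 0$. Two ingredients should deliver this: first, a Korenblum--Shirokov-type inner-factor theorem, according to which the singular part of the inner factor of any nonzero function in $W^{1,p}_a$ is supported on Beurling-Carleson sets; second, a duality computation showing that an $f \in K_\Theta$ belongs to $K_{BS_{\nu_\Ca}}$ if and only if $S_{\nu_\K}$ divides the inner factor of the natural conjugate $\bar z \Theta \bar f \in K_\Theta$. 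Combining these two ingredients should force $W^{1,p}_a \cap K_\Theta \subseteq K_{BS_{\nu_\Ca}}$, a strict subspace of $K_\Theta$ whenever $\nu_\K \neq 0$.

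For the substantive implication (iii)$\Rightarrow$(i) a constructive approach is natural, in three stages. First, exhaust $\nu = \nu_\Ca$ by restrictions $\nu_n := \nu|_{E_n}$ to an increasing sequence of Beurling-Carleson sets with $\nu(E_n) \to \nu(\T)$. Via the factorization $S_\nu = S_{\nu_n} S_{\nu-\nu_n}$ and the orthogonal decomposition $K_{S_\nu} = K_{S_{\nu_n}} \oplus S_{\nu_n} K_{S_{\nu-\nu_n}}$, combined with $|S_{\nu-\nu_n}| \to 1$ pointwise on $\D$, one obtains density of $\bigcup_n K_{S_{\nu_n}}$ in $K_{S_\nu}$. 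Second, reduce to the case of a measure $\mu$ supported on a single Beurling-Carleson set $E$ and construct a dense subset of $\A^\infty \cap K_{S_\mu}$: the Carleson--Novinger--Taylor--Williams construction supplies outer $\phi \in \A^\infty$ vanishing on $E$ to any prescribed order, and compressions of the form $P_{K_{S_\mu}}(\phi \cdot p)$ applied to polynomials $p$ (or equivalently Cauchy-type integrals against the Clark measure of $S_\mu$) should produce smooth approximants. Third, accommodate the Blaschke factor by truncating to finite sub-products $B_n$ and using that the associated $K_{B_n}$ are finite-dimensional, rational, and hence contained in $\A^\infty$.

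The principal obstacle lies in the second stage above: since neither $S_\mu$ nor $\bar S_\mu$ belongs to $\A^\infty$, all smoothness of the approximants must be imparted by the auxiliary outer functions $\phi$. Verifying \emph{simultaneously} that the compressions land in $\A^\infty$ and that their range is dense in $K_{S_\mu}$ requires quantitative estimates that link the vanishing order of $\phi$ on $E$ to the singular mass of $\mu$, and this is precisely the point at which the Beurling-Carleson entropy condition \eqref{thincond} is used essentially. The construction genuinely fails in the Korenblum-Roberts setting, as no outer $\phi \in \A^\infty$ can vanish to sufficient order near $\operatorname{supp}\nu_\K$, in agreement with the negative direction of the equivalence.
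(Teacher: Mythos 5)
Your overall architecture ((i)$\Rightarrow$(ii) trivial, contrapositive for (ii)$\Rightarrow$(iii), constructive exhaustion for (iii)$\Rightarrow$(i)) matches the paper's, but both nontrivial implications contain genuine gaps.

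For (ii)$\Rightarrow$(iii): the $S^*$-invariance reduction is fine, and your second ingredient is a correct computation --- for $f \in K_\Theta$ one has $f \in K_{B S_{\nu_\Ca}}$ if and only if $S_{\nu_\K}$ divides the inner factor of $\widetilde{f} = \conj{z}\,\Theta\,\conj{f}$. The problem is that your two ingredients do not chain together. Ingredient one (even granting it) constrains the inner factor $I_f$ of $f$ itself, whereas ingredient two requires control of the inner factor of the \emph{conjugate} $\widetilde{f}$, and there is no transfer between the two: on $\T$ one has $\Theta = c\, z\, I_f\, I_{\widetilde{f}}\, O_f/\conj{O_f}$, so the outer factor mediates between them and can absorb singular content. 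Concretely, take $\Theta = S_\nu$ with $\nu$ purely Korenblum--Roberts: every kernel $k_\Theta(\lambda,\cdot)$ is \emph{outer} (its numerator $1-\conj{\Theta(\lambda)}\Theta$ is invertible in $H^\infty$), so it satisfies the conclusion of your ingredient one vacuously, yet its conjugate $(\Theta - \Theta(\lambda))/(z-\lambda)$ has inner factor equal to a Frostman shift of $\Theta$ divided by a Blaschke factor, which by Frostman's theorem is a Blaschke product for a.e.\ $\lambda$; thus $S_{\nu_\K}$ does not divide it. So no statement about $I_f$ alone can yield the divisibility you need. Moreover, ingredient one itself is not available for $1<p\le 2$: such $W^{1,p}_a$ functions need not extend continuously to $\T$, so Carleson--Shirokov zero-set and inner-factor theory does not apply, and proving it would in effect require the very mechanism the paper uses instead --- the Korenblum--Roberts cyclicity of $S_\K := S_{\nu_\K}$ in the Bergman spaces $L^q_a$ together with the boundedness of the Cauchy pairing on $W^{1,p}_a \times L^q_a$ (\thref{duall}), which directly exhibits $\Theta_\Ca K_{S_\K}$, with $\Theta_\Ca := BS_{\nu_\Ca}$, as orthogonal to $W^{1,p}_a \cap K_\Theta$ in the decomposition $K_\Theta = K_{\Theta_\Ca} \oplus \Theta_\Ca K_{S_\K}$.

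For (iii)$\Rightarrow$(i): your stages one and three are sound (stage one is essentially the paper's concluding step, which it carries out via weak convergence of the kernels $k_{\Theta_N}$ and Banach--Saks), but stage two --- the heart of the proof --- is incomplete, as you yourself concede, and the specific device you propose runs into a concrete obstruction. Writing the compression as $P_{K_{S_\mu}}(\phi p) = \phi p - S_\mu\, P_+\big(\conj{S_\mu}\,\phi p\big)$, the estimates of \thref{shapirolemma} do give $\conj{S_\mu}\,\phi p \in C^\infty(\T)$, hence $q := P_+(\conj{S_\mu}\,\phi p) \in \A^\infty$ by \thref{rieszcont}; but the subtracted term is $S_\mu q$, and $q$ has no reason to vanish to any order on $E$, so $S_\mu q$ --- and therefore the compression --- has no reason to be smooth. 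The paper's resolution is genuinely different: instead of multiplying by an analytic smooth function and projecting, it multiplies by the \emph{conjugate} $\conj{H_n}$ of a smooth function and projects, i.e.\ applies the co-analytic Toeplitz operator $T_{\conj{H_n}}$ to the kernels. Since $K_\Theta$ is invariant under co-analytic Toeplitz operators, $T_{\conj{H_n}} k_\Theta = P_+(\conj{H_n} k_\Theta)$ lies in $K_\Theta$ with no correction term to subtract, and it is smooth because $S_\mu \conj{H_n} \in C^\infty(\T)$. The second point your sketch misses is the approximate-identity property: one needs $H_n \to 1$ a.e.\ and boundedly, so that $T_{\conj{H_n}} \to I$ strongly; the fixed Taylor--Williams outer function cannot do this, and the paper's main technical work (\thref{multprop}) is precisely the modification --- cutting off the defining density $h$ by the factors $1-\phi_n$ --- that produces such a sequence. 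Without these two devices, the simultaneous smoothness and density that you identify as the principal obstacle remains unestablished.
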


The equivalence of $(i)$ and $(ii)$ means of course that for a wide range of analytic function spaces $\M$, the density of $\M \cap K_\Theta$ in $K_\Theta$ is equivalent to condition $(iii)$ of \thref{maintheorem}. For instance, the analytic H\"older class $\Lambda_{a}^{\alpha}$, consisting of analytic functions on $\D$ with H\"older continuous boundary values on $\T$ of order $\alpha \in (0,1)$, is a subset of $W^{1,p}_{a}$ for all $p$ satisfying $p < 1/(1-\alpha)$. It follows that $\Lambda_{a}^{\alpha} \cap K_\Theta$ is dense in $K_\Theta$ if and only if $(iii)$ holds, for any $\alpha > 0$. 

The method we employ to establish the density results is to a large extent constructive. To each function $f$ in an admissible model space we construct explicitly a smooth approximant, and the approach is based on co-analytic Toeplitz operators and smoothing out the boundary singularities of the function $f$ by multiplication with a suitable Toeplitz operator symbol. More precisely, every model space $K_\Theta$ satisfying property $(iii)$ in \thref{maintheorem} contains an increasing sequence of model spaces $\{K_{\Theta_n} \}$ such that $\cup_{n \geq 1} K_{\Theta_n}$ is dense in $K_\Theta$ and the functions in each $K_{\Theta_n}$ are sufficiently well-behaved at the boundary so that one can find a co-analytic multiplier $\conj{h}$ making $\conj{h}f$ smooth on $\T$, for each $f \in K_{\Theta_n}$. The Riesz projection then maps $\conj{h}f$ back into the model space and preserves smoothness. A properly constructed sequence of such symbols $\{h_n\}_n$ will give us a corresponding sequence of co-analytic Toeplitz operators which smooth out the singularities and converges to the identity (see \thref{smoothseq}). 

It seems that the approach can be generalized to other spaces with similar properties and on which the co-analytic Toeplitz operators are bounded. For instance, one can easily see that our constructive approach carries over to model subspaces of $H^p$-spaces for $p \in (1,\infty)$. Those spaces can be defined as those $f \in H^p$ for which the function $\conj{\Theta}f$ on $\T$ coincides with boundary values of co-analytic function with a zero at the origin. Our approach breaks down for $p \leq 1$ and $p = \infty$, but nevertheless \thref{maintheorem} as stated, holds even for $p \in (0,1]$, as long as the model spaces are appropriately defined, and for $p = \infty$ if we replace the norm topology on $H^\infty$ with the usual weak* topology. For more details on this see \cite{abstractapproach}. In the cases $p \in (0,1] \cup \{\infty\}$ we know of no constructive approach to establish the existence of approximants. 

To our knowledge, the result on density in model spaces of functions with continuous extensions to the boundary $\T$ so far has not seen a constructive proof, even in the case $p=2$. Our approach breaks down as well. The main difference is that the set of boundary singularities of a function in a general model space is much worse than the set of singularities of functions in $K_\Theta$ where $\Theta$ is carried by a sequence of Beurling-Carleson sets, and that the co-analytic Toeplitz operators and related projections are not good at preserving boundary continuity.

Our theorem treats the density of analytic Sobolev spaces $W^{1,p}_a$ for $p > 1$. When $0 <p < 1$ the situation is simple, since then the spaces $W^{1,p}_a$ contain all the bounded analytic functions, and thus $W^{1,p}_a \cap K_\Theta$ is obviously dense in $K_\Theta$, for all inner functions $\Theta$. In the case of $p = 1$, little is known and the problem seems interesting. In the proof of the necessity of condition (iii) %of vanishing Korenblum-Roberts part 
in \thref{maintheorem}, we use a duality argument and a technique from \cite{starinvsmooth} which employs a deep result by Korenblum and Roberts from \cite{korenblum1981cyclic} and \cite{roberts1985cyclic}. This result asserts that the inner functions associated with singular measures which do not assign positive measure to Beurling-Carleson sets are cyclic vectors for the forward shift operator on the Bergman spaces. An extension of the result to the space $W^{1,1}_a$ seems to be related to the open problem of determining if the condition $\nu(E)=0$ for any Beurling-Carleson set is sufficient for the associated singular inner function $S_\nu$ to be a weak* cyclic vector for the forward shift operator on the Bloch space. Similar remarks regarding connections to this open problem also appeared in \cite{starinvsmooth} and we refer the reader to \cite{brown1991multipliers} and \cite{anderson1991inner} for further details and progress on this question. Notably, weak* cyclicity has been established for a particular example of a singular inner function in \cite{anderson1991inner}, and consequently there exists an inner function $\Theta$, such that $W^{1,1}_{a}\cap K_\Theta = \{0\}$. In particular, this means that there are model spaces which do not contain non-trivial functions in the so-called \emph{Wiener algebra}, which consists of analytic functions with a Taylor series which converges absolutely in the closed disk $\cD$. As a curious fact we note that if we replace \textit{converges absolutely} by \textit{converges uniformly}, then the situation is straightforward: functions with uniformly convergent Taylor series are dense in $K_\Theta$, for any $\Theta$. This fact has been observed in \cite{abstractapproach}. 

This paper is organized as follows. In the preliminary Section \ref{Prel} we have gathered some background results which will be relevant in the later proofs. Section \ref{SecSuff1} contains the main technical argument, which is the construction of a certain sequence of smoothing functions which converge to the identity. The last section is devoted to the proof of our main result.

\section{Preliminaries} \label{Prel}

We gather a few auxiliary results that will be useful in the proof of the main theorem.

\subsection{Riesz and Herglotz transforms}\label{THsec}

Let $P_+: L^2(\T) \to H^2$ denote the \textit{Riesz projection} given by $$P_+f(z) := \int_{\T}\frac{\zeta}{\zeta-z}f(\zeta)dm(\zeta), \qquad z \in \D,$$ and the \emph{Herglotz transform} by 
\begin{equation*} Hf(z):= \int_{\T}\frac{\zeta + z}{\zeta-z}f(\zeta)dm(\zeta), \qquad  z \in \D,
\end{equation*}
where $dm$ denotes the normalized Lebesgue measure on $\T$. The following result is well-known and simple (see, for instance, \cite{garnett}). Below, $C^\infty(\T)$ denotes the class of infinitely differentiable functions on $\T$.

\begin{prop} \thlabel{rieszcont} The Riesz projection $P_+$ and the Herglotz transform $H$ map $C^{\infty}(\T)$ into $\A^{\infty}$.
\end{prop}

\subsection{A decomposition of singular measures} \label{Modsec}
We present the abstract decomposition result for singular measures which was stated in the introduction. The result is readily established and certainly not new. A version of it has been briefly mentioned in \cite{roberts1985cyclic}.

\begin{prop} \thlabel{Decomp} Let $\nu$ be a positive finite singular Borel measure on $\T$. Then $\nu$ decomposes uniquely as a sum of mutually singular measures \begin{equation} \label{Mdecomp} \nu= \nu_\Ca+ \nu_\K,
\end{equation} such that there exists a sequence of Beurling-Carleson sets $\{E_n\}_{n \geq 1}$ satisfying $$\lim_{n \to \infty} \nu_\Ca(E_n) = \nu_\Ca(\T)$$ and $\nu_\K$ vanishes on every Beurling-Carleson set.
\end{prop}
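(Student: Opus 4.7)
The plan is to build $\nu_\Ca$ as a maximal restriction of $\nu$ to Beurling-Carleson sets via exhaustion. Set
\[
\alpha := \sup\bigl\{\nu(E) : E \subset \T \text{ is a Beurling-Carleson set}\bigr\},
\]
which is finite since $\nu(\T)<\infty$. I would pick Beurling-Carleson sets $E_n$ with $\nu(E_n)\to\alpha$, and replace them by the increasing sequence $F_n := E_1 \cup \cdots \cup E_n$, invoking the fact that a finite union of Beurling-Carleson sets is again Beurling-Carleson (so that $\nu(F_n) \to \alpha$ still holds). With $F := \bigcup_n F_n$, I would then define $\nu_\Ca := \nu|_F$ and $\nu_\K := \nu|_{\T\setminus F}$; these are mutually singular, sum to $\nu$, and $\nu_\Ca(F_n) = \nu(F_n) \to \alpha = \nu(F) = \nu_\Ca(\T)$ gives the required concentration.

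The crux is then that $\nu_\K$ vanishes on every Beurling-Carleson set $E$. Since $E \cup F_n$ is Beurling-Carleson, one has $\nu(E \cup F_n) \le \alpha$; combined with $\nu(F_n)\to\alpha$, this forces $\nu(E \setminus F_n) \to 0$, and monotone convergence on the decreasing family $E \setminus F_n \downarrow E \setminus F$ yields $\nu_\K(E) = \nu(E \setminus F) = 0$.

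For uniqueness, suppose $\nu = \mu + \mu'$ is another decomposition with the stated properties. The vanishing of $\mu'$ on Beurling-Carleson sets gives $\mu(E)=\nu(E)$ for each such $E$, and since $0 \le \mu \le \nu$ this upgrades to $\mu(A \cap E) = \nu(A \cap E)$ for every Borel set $A$. Choosing an increasing BC witness $\{G_n\}$ for the concentration of $\mu$ and setting $H_n := F_n \cup G_n$, I would verify that $\mu$ is concentrated on $H := \bigcup_n H_n$ and that $\nu(H) = \alpha = \nu(F)$, whence $\mu(A) = \nu(A \cap H) = \nu(A \cap F) = \nu_\Ca(A)$ for every Borel $A$.

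The main technical obstacle is the assertion that finite unions of Beurling-Carleson sets are Beurling-Carleson. While intuitive, it is awkward to verify directly from the arc-entropy definition because $t \mapsto t\log(1/t)$ is not monotone on $(0,1)$. The cleanest route is to pass to the equivalent characterization $\int_\T \log\frac{1}{d(\zeta,E)}\,dm(\zeta) < \infty$; combined with the identity $d(\cdot, E_1\cup E_2) = \min(d(\cdot,E_1), d(\cdot,E_2))$ and the elementary bound $\log\frac{1}{\min(a,b)} \le \log\frac{1}{a} + \log\frac{1}{b}$ applied to positive parts, stability of the class under finite unions is immediate.
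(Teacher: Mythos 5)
Your proof is correct and follows essentially the same route as the paper: the same supremum over Beurling-Carleson sets, the same exhaustion by an increasing sequence of such sets (using closure under finite unions) with union $F$, and the same restrictions $\nu_\Ca = \nu|_F$ and $\nu_\K = \nu|_{\T \setminus F}$. The paper's uniqueness step is condensed into the observation that $\nu_\Ca(A) = \sup\,\{\nu(A\cap E) : E \subset \T \text{ Beurling-Carleson}\}$, which is precisely what your comparison argument with $H_n = F_n \cup G_n$ verifies; you merely supply details (the vanishing of $\nu_\K$ on Beurling-Carleson sets, and the finite-union stability via the distance-integral characterization) that the paper leaves as ``readily follows.''
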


\begin{proof}[Sketch of proof]
Let $\{E_n\}_{n \geq 1}$ be a sequence of Beurling-Carleson sets such that \[\lim_{n \to \infty} \nu(E_n) = \sup\, \{\nu(E): E\subset \T \, \, \, \text{Beurling-Carleson set } \}. \] Let $\nu|E$ and $\nu|E^c$ denote the restrictions of $\nu$ to $E = \cup_{n \geq 1} E_n$ and $E^c$, respectively. Then it is easy to see that we can take $\nu_\C = \nu|E$ and $\nu_\K = \nu|E^c$, and that such a decomposition is unique. 
\end{proof}

\subsection{Functionals on analytic Sobolev spaces} 
Let $1 < q < \infty$ and denote the classical Bergman spaces by $L^{q}_a$, which are the closed subspaces of $L^{q}(\D,dA)$ consisting of analytic functions on $\D$.  Recall also the definition of the analytic Sobolev spaces $W^{1,p}_a$ in \eqref{sobolevdef}. The following is a well-known Cauchy duality result (see, for instance, \cite{peller1982hankel}).

\begin{prop}\thlabel{duall}
Let $1<p < \infty$ and $q=p/(p-1)$. Then for every $g\in L^{q}_a$ and $f \in W^{1,p}_a$ we have that
\begin{equation}\label{Cauchylim} \lim_{r \rightarrow 1-} \int_{\T} f(r\zeta) \overline{g(r\zeta)} dm(\zeta)
\end{equation} exists, and there exists a constant $C_p >0$, only depending on $p$, such that
\begin{equation}\label{FWp} 
\sup_{0<r<1}\,\abs{ \int_{\T} f(r\zeta) \overline{g(r\zeta)} dm(\zeta) } \leq C_p \norm{f}_{W^{1,p}} \norm{g}_{L^q }.  \end{equation}
\end{prop}

Thus the spaces $W^{1,p}_a$ and $L^q_a$ are duals to each other under the Cauchy dual-pairing in \eqref{Cauchylim}.

\subsection{An estimate for evaluations at the boundary}  Let $E \subset \T$ be a closed set of Lebesgue measure zero, $\nu$ be a measure living on $E$, and $\Theta = S_\nu$. It is well-known that each function $f \in K_\Theta$ has an analytic continuation across $\T \setminus E$. In fact, %A simple functional analytic argument using the principle of uniform boundedness ensures that the 
evaluations of any derivative of a function in $K_\Theta$ at any $\lambda \in \T \setminus E$ is a bounded linear functional on $K_\Theta$. Moreover, we have the following crude but important estimate for the norm of these evaluations.

\begin{prop} \thlabel{boundaryevaluationestimate}
Given any positive integer $n$, there exists a constant $C_n > 0$ with the following property: if all functions in $K_\Theta$ are analytic in a neighbourhood of a point $\lambda \in \T$, then the following estimate for derivatives holds: 
\[ |f^{(n)}(\lambda)| \leq C_n\|f\|_{2} \Big( 1 + \sum_{k=1}^{n+1} |\Theta^{(k)}(\lambda)| \Big), \quad f \in K_\Theta. 
\]
\end{prop}

\begin{proof}[Sketch of proof] 
The special setting $n=1$ will exhibit all characteristics of the general case. If every $f \in K_\Theta$ is analytic in a neighbourhood of $\lambda \in \T$, then we have

\[
f'(\lambda) = \lim_{ \xi \to \lambda} \int_{\T} f(\zeta) \conj{\frac{k_\Theta (\lambda , \zeta) - k_\Theta (\xi, \zeta) }{\lambda - \xi}} dm(\zeta).
\]
This means that the difference quotient $ \frac{k_\Theta (\lambda , \cdot) - k_\Theta (\xi, \cdot) }{\conj{\lambda - \xi}}$ converges weakly as $\xi \to \lambda$, to some $D(\lambda, \cdot) \in K_\Theta$. %hence by the principle of uniform boundedness, the evaluation $\lambda \mapsto f'(\lambda)$ is a bounded linear functional on $K_\Theta$. 
It follows that the inner product of any function $f \in K_\Theta$ with $D(\lambda, \cdot)$ equals $f'(\lambda)$ and the norm of the evaluation $\lambda \mapsto f'(\lambda)$ equals $\sqrt{D(\lambda, \lambda)}$. Recall that since $z \mapsto \frac{k_\Theta (\lambda , z ) - k_\Theta (\xi, z) }{\conj{\lambda - \xi}}$ converges uniformly on compact subsets of $\D$ to $\frac{\partial}{\partial \conj{\lambda}} k_\Theta(\lambda, z)$, we have by uniqueness of weak limits that $D(\lambda, z) = \frac{\partial}{\partial \conj{\lambda}} k_\Theta(\lambda, z)$. Now a straight-forward argument involving differentiation of the kernel function with respect to $\conj{\lambda}$ gives
\[ 
\frac{\partial}{\partial \conj{\lambda}} k_\Theta(\lambda, z)= \frac{-\conj{\Theta'(\lambda)}\Theta(z)(1-\conj{\lambda}z) + (1-\conj{\Theta(\lambda)} \Theta(z))z}{(1-\conj{\lambda}z)^2}. 
\] 
%and has the property that the inner product of any function $f \in K_\Theta$ with $\frac{\partial}{\partial \conj{\lambda}} k_\Theta(\lambda, \cdot)$ equals $f'(\lambda)$. Consequently, the norm of the evaluation $\lambda \mapsto f'(\lambda)$ equals $\sqrt{D(\lambda,\lambda)}$. We note that  
Consequently, we can write
\begin{gather*}
D(\lambda, \lambda) = \lim_{r \to 1-} D(\lambda, r\lambda) = \lim_{r \to 1-} \frac{-\conj{\Theta'(\lambda)}\Theta(r\lambda)(1-r) + (1-\conj{\Theta(\lambda)} \Theta(r\lambda))r\lambda}{(1-r)^2}. 
\end{gather*}
Employing the classical l'Hopital's rule twice, we readily verify that $D(\lambda,\lambda)$ is a linear combination of bounded terms and products of the form $\conj{\Theta'(\lambda)}\Theta(\lambda)$, $\conj{\Theta(\lambda)}\Theta''(\lambda)$ and alike. Thus
\[ |D(\lambda, \lambda)| \leq C \Big(1 + |\Theta'(\lambda)| + |\Theta''(\lambda)|\Big)^2, \] and the stated estimate follows from this.
\end{proof}

\section{A smoothing approximation to the identity} \label{SecSuff1}

We will make use of a construction, initially due to Carleson in \cite{carlesonuniqueness} and later refined by Taylor and Williams in \cite{taylor1970ideals}. Let $E$ be a Beurling-Carleson set on $\T$, with complementary open set $U$ that decomposes into $U = \cup_k I_k$, where $I_k$ are disjoint maximal circular arcs of $\T$. Without loss of generality, we may assume that none of the arcs include the point $1 \in \T$. Then, if $I_k$ is extending from $\alpha_k = e^{ia_k}$ to $\beta_k = e^{ib_k}$, we will have $0 \leq a_k< b_k < 2\pi$ and because $E$ is a Beurling-Carleson set, by definition we have that 
\[\sum_k (b_k-a_k)\log \left(\frac{1}{b_k - a_k}\right) < \infty.
\] 
Taylor and Williams use this assumption to construct a function $h(t)$ defined for $t \in [0, 2\pi]$, with the properties that $h(t) = \infty$ on the complement of the union of the intervals $(a_k, b_k)$, on those intervals $h$ is of class $C^\infty$ and 
\[\lim_{t \to a_k^+} h(t) = \lim_{t \to b_k^-} h(t) = \infty,
\]
in such a way that $h(t)$ is integrable on $[0,2\pi)$ and the outer function 
\begin{gather} \label{gfunc} g(z) = \exp\Bigg( -\frac{1}{2\pi}\int_0^{2\pi} \frac{e^{it}+z}{e^{it}-z} h(t) dt  \Bigg) 
\end{gather} 
belongs $\A^{\infty}$ with zero set precisely equal to $E$. Moreover, and this is crucial for our further purposes, the construction of Taylor and Williams is so that 
\begin{gather} \label{gzeros}
E \subseteq \bigcap_{n=1}^\infty \{ \zeta \in \T : g^{(n)}(\zeta) = 0\}.
\end{gather} 
Further details of their construction can be found in \cite[Theorem 3.3]{taylor1970ideals}. 

Let $\delta_E(z)$ denote the distance from a point $z \in \cD \setminus E$ to the set $E$. It follows easily from \eqref{gzeros} that \begin{equation} \label{gest} |g^{(k)}(z)| \leq C_n \, \delta_E(z)^n
\end{equation} for any $n \geq 0$, $g^{(k)}$ being the $k$:th derivative. Indeed, if $z \in \D$ and $w \in E$ is a point such that $|z - w| = \delta_E(z)$, then let $L$ be the line segment between $z$ and $w$. We have \[|g(z)| = |g(z) - g(w)| = \Big| \int_L g'(s) ds \Big| \leq \delta_E(z) \cdot \sup_{s \in L} |g'(s)|.\] Applying the same reasoning to $g'(s)$, we obtain $|g(z)| \leq \delta_E(z)^2 \cdot \sup_{s \in L} |g''(s)|$ and we can iterate to obtain \eqref{gest} for $k = 0$. The estimate for derivatives follows from the invariance of \eqref{gzeros} under differentiation.

\begin{prop}
\thlabel{shapirolemma} Let $\nu$ be a positive singular measure supported on a Beurling-Carleson set $E$, $B$ be a finite Blaschke product, and set $\Theta = B S_\nu$. If $g \in \A^\infty$ satisfies \eqref{gzeros}, then for any function $f \in K_{\Theta}$, we have that $f\conj{g}$ is of class $C^\infty(\T)$. 
\end{prop}

\begin{proof}
The function $f\conj{g}$ is already $C^\infty$ on the complement of the set $E$, since both $f$ and $g$ are. Moreover, using the fact that $B$ is analytic across $\T$ we easily establish the following simple estimate \[ |\Theta^{(n)}(z)| \leq C(n) \int_{E} \frac{1}{|1 - \conj{\zeta}z|^{n+1}} d\nu(\zeta) \leq C(n) \delta_E(z)^{-n-1} \nu(\T).\] Together with \thref{boundaryevaluationestimate} and \eqref{gest}, this implies that derivatives of any order of $f(e^{it})\conj{g(e^{it})}$ tend to zero as $e^{it} \to E$. It follows that $f\conj{g} \in C^\infty(\T)$. 
\end{proof}

We will now modify the construction of the function $g$ above to obtain a sequence of analytic functions which smooths out singularities of functions in our spaces and converges to the identity.

\begin{prop} \thlabel{multprop}
Let $\nu$ be a positive measure supported on a Beurling-Carleson set $E$, $B$ a finite Blaschke product, and $\Theta = B S_\nu$. Then there exists a sequence of analytic functions $\{H_n\}_{n\geq 1}$ in $\A^\infty$ with following properties:

\begin{enumerate}[(i)]
\item the boundary function of $f\conj{H_n}$ is of class $C^\infty(\T)$ for any $f \in K_{\Theta}$,
\item $\lim_{n \to \infty} H_n(\zeta) = 1$ holds for Lebesgue almost every $\zeta \in \T$,
\item $\sup_{n} \|H_n\|_\infty < \infty$.
\end{enumerate}
\end{prop}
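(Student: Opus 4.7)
The strategy is to produce $H_n$ by taking appropriate roots of the Taylor--Williams function $g$ above. Recalling that $g = \exp(-H[h])$ for some non-negative weight $h$ on $[0,2\pi]$ (we can always arrange $h \geq 0$ by adding a constant, which only rescales $g$), I would set
\[
H_n(z) := \exp\Bigg(-\frac{1}{2\pi n}\int_0^{2\pi}\frac{e^{it}+z}{e^{it}-z}\, h(t)\,dt\Bigg) = g(z)^{1/n}, \qquad z \in \D.
\]
Thus $H_n$ arises from the same Taylor--Williams recipe as $g$, but with $h$ replaced by $h/n$. The key observation is that $h/n$ satisfies all the structural conditions required by that construction: it is non-negative, integrable, of class $C^\infty$ on each complementary arc $(a_k,b_k)$, infinite on $E$, and blows up at the endpoints at the same qualitative rate modulo a factor of $1/n$.

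Properties (ii) and (iii) should drop out quickly from this explicit form. For (iii), the boundary values satisfy $|H_n(\zeta)| = \exp(-h(\zeta)/n) \leq 1$ a.e. on $\T$, so $\sup_n \|H_n\|_\infty \leq 1$. For (ii), the boundary values can be written as $H_n(\zeta) = \exp(-h(\zeta)/n - i\tilde h(\zeta)/n)$, where $\tilde h$ is the harmonic conjugate of $h$, which is finite a.e. since $h \in L^1(\T)$. As $n \to \infty$ both exponents tend pointwise a.e. to $0$, and consequently $H_n(\zeta) \to 1$ a.e. on $\T$.

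The main work, and the principal obstacle, is verifying (i): once it is established that $H_n \in \A^\infty$ with derivatives of every order vanishing on $E$, \thref{shapirolemma} applied to $H_n$ in place of $g$ immediately yields that $S_\nu\conj{H_n} \in C^\infty(\T)$. Since $H_n$ is produced by the Taylor--Williams recipe with the admissible weight $h/n$, the arguments of \cite{taylor1970ideals} apply verbatim and deliver these two properties. A more self-contained route is to derive the infinite-order vanishing of $H_n$ at $E$ from the corresponding property of $g$: the estimate $|g(z)| \leq C_m \delta_E(z)^m$ valid for every $m$ immediately gives $|H_n(z)| \leq C_m^{1/n} \delta_E(z)^{m/n}$, and the required bounds on the derivatives $H_n^{(k)}$ can then be obtained by differentiating the exponential representation and combining the arbitrarily strong decay of $H_n$ near $E$ with polynomial-rate estimates on the derivatives of the Herglotz integral of $h$, entirely in the spirit of the estimates \eqref{Iest} used for $S_\nu$ in the proof of \thref{shapirolemma}.
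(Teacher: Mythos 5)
Your constructions for properties (ii) and (iii) are correct and even cleaner than what is needed: with $h \geq 0$ one gets $\|H_n\|_\infty \leq 1$ outright, and the a.e.\ convergence follows from integrability of $h$ and a.e.\ finiteness of its conjugate function. The approach itself (scaling the Taylor--Williams weight to $h/n$, i.e.\ taking roots of $g$) is genuinely different from the paper's, which instead multiplies: the paper sets $H_n = \exp\left(-H[h(1-\phi_n)]\right) = g F_n$, where $\phi_n$ is a smooth cutoff supported in the complementary arcs, so that $F_n = \exp\left(H[h\phi_n]\right)$ is the exponential of the Herglotz transform of a \emph{globally} $C^\infty$ density. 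Then (i) is immediate from already-proved facts: $S_\nu\conj{H_n} = \left(S_\nu\conj{g}\right)\conj{F_n}$ is a product of the $C^\infty(\T)$ function furnished by \thref{shapirolemma} and the $C^\infty(\T)$ function furnished by \thref{rieszcont}. No boundary estimate for a new outer function is ever needed.

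This is exactly where your proposal has a genuine gap: property (i) for $H_n = g^{1/n}$ is not established. Your ``self-contained route'' rests on estimates for the derivatives of the Herglotz integral $H[h]$ that are claimed to be ``in the spirit of \eqref{Iest}'', but that analogy fails: the estimate \eqref{Iest} for $S_\nu^{(k)}$ uses crucially that the measure $\nu$ is \emph{supported on} $E$, so that $|\zeta - z| \geq \delta_E(z)$ on the support. The density $h\,dt$ is supported on the complementary arcs and blows up \emph{at} $E$, so the analogous size estimate $|(H[h])^{(k)}(z)| \lesssim \delta_E(z)^{-2k}$ is simply false; the triangle-inequality bound degenerates near $E$ to terms of the form $(\sup_{d(s)\gtrsim \delta_E(z)} h(s))\cdot(1-|z|)^{-k}$, and since $1-|z|$ can be arbitrarily small compared with $\delta_E(z)$ along tangential approaches to a point of $E$, such terms cannot be absorbed by the decay $|H_n(z)| \leq C_m^{1/n}\delta_E(z)^{m/n}$. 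Equivalently, by Fa\`a di Bruno, $H_n^{(k)}$ is a combination of terms $|g|^{1/n-m}\prod_i |g^{(k_i)}|$, so one needs pointwise \emph{lower} bounds on $|g|$ near $E$ (or, what is the same, upper bounds on the derivatives of $\log g$), and nothing of this sort follows from \eqref{gest} and \eqref{gzeros}, which are upper bounds only. Your alternative justification --- that the arguments of the Taylor--Williams paper ``apply verbatim'' to the weight $h/n$ --- is plausible, because their mechanism is a super-logarithmic blow-up of $h$ near $E$, a property stable under multiplication by $1/n$; but this is precisely the content that would have to be checked inside their construction, and it is neither quoted in the paper nor verified in your proposal. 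So the strategy is salvageable, but as written the central claim $g^{1/n} \in \A^\infty$ (with all derivatives vanishing on $E$) is asserted rather than proved, whereas the paper's multiplicative correction $H_n = gF_n$ is designed exactly to avoid having to prove any such statement.
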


\begin{proof} Let $\psi_n$ be an increasing sequence of $C^\infty$-functions with $0 \leq \psi_n \leq 1$, compactly supported in the interval $(0,1)$, and such that $\psi_n(t) = 1$ for all $t \in [1/n, 1-1/n]$. Let $U = \cup_{k=1}^\infty (a_k, b_k)$ be as defined in the beginning of the section and consider the functions
\[\phi_n(t) = \sum_{k=1}^n \psi_n \left( (t-a_k) /(b_k-a_k) \right).
\] 
Certainly $\phi_n$ is $C^\infty$ and $\lim_{n \to \infty} \phi_n(t) = 1$ almost everywhere with respect to the Lebesgue measure. Note that $\phi_n$ is identically equal to zero in a neighbourhood of any of the endpoints of the intervals appearing in the decomposition of $U$. Let the function $h$ and corresponding analytic function $g$ given by \eqref{gfunc} be as in the construction by Williams and Taylor described above. It follows from the construction that $h\phi_n$ is a $C^\infty$-function, and that $h(1-\phi_n)$ converges pointwise to zero almost everywhere, which by dominated convergence theorem applied to $|h(1-\phi_n)| \leq |h|$ implies that 
\begin{equation} \label{intzero} \lim_{n \to \infty} \int_0^{2\pi} h(t)(1-\phi_n(t))dt = 0. 
\end{equation}
We set \begin{gather*} H_n(z) = \exp\Bigg( -\frac{1}{2\pi}\int_0^{2\pi} \frac{e^{it}+z}{e^{it}-z} h(t)(1-\phi_n(t)) dt  \Bigg).
\end{gather*}
Note that certainly $H_n$ are uniformly bounded, so $(iii)$ is satisfied. By construction, any point $\zeta= e^{it}$ with $t \in U$ will have an open neighbourhood around it on which $h(1-\phi_n)$ vanishes for sufficiently large $n$. Thus for sufficiently large $n$ the functions $H_n$ will have analytic continuations to a neighbourhood of any fixed $\zeta= e^{it}$ with $t \in U$. By \eqref{intzero} we deduce that $\lim_{n \to \infty} H_n(\zeta) = 1$ for each such $\zeta$. Thus $\lim_{n \to \infty} H_n(\zeta) = 1$ for almost every $\zeta \in \T$, which is the assertion in $(ii)$. 
We can write $H_n = gF_n$, where $F_n$ is the exponential of the Herglotz transform of $h\phi_n$. Since $h\phi_n$ belongs to $C^\infty(\T)$, we have that $F_n$ has a boundary function in $C^\infty(\T)$ by \thref{rieszcont}. According to \thref{shapirolemma}, we now conclude that $f\conj{H_n} = f\conj{gF_n}$ is of class $C^\infty(\T)$ for any $f \in K_{\Theta}$ which proves the assertion in $(i)$.
\end{proof}

We have the following immediate corollary.

\begin{cor} \thlabel{smoothseq}
Let $\nu$ be a positive singular measure supported on a Beurling-Carleson set $E$, $B$ be a finite Blaschke product, and $\Theta = B S_\nu$. Then there exists a sequence of Toeplitz operators $T_n: K_\Theta \to K_\Theta \cap \A^\infty$, which converge to the identity operator in the strong operator topology. 
\end{cor}

\begin{proof}
The operators $T_n$ are given by $T_nf = P_+ \conj{H_n}f$, where $\{H_n\}_{n \geq 1}$ is the sequence constructed in \thref{multprop}. From $(i)$ of that same proposition together with \thref{rieszcont} we get that $T_n f \in \A^\infty$, and since $K_\Theta$ is well-known to be invariant under Toeplitz operators with co-analytic symbols, we conclude that $T_nf \in K_\Theta \cap \A^\infty$ for all $f \in K_\Theta$. According to $(ii)$ and $(iii)$ of \thref{multprop}, we may apply the dominated convergence theorem to deduce that $T_n$ converges to the identity in the strong operator topology. %follows from $(ii)$ and $(iii)$ of \thref{multprop} together with the dominated convergence theorem. 
\end{proof}

\section{Proof of the main result} \label{PoMR}

\subsection{Proof of the sufficiency}

Note that \thref{smoothseq} essentially establishes a constructive approximation scheme in the special case when the singular measure $\nu$ in the definition of the singular inner part of $\Theta$ is carried by a single Beurling-Carleson set. Our task here is thus limited to carrying out a simple verification that we can extend our approximations to a measure $\nu$ supported on a countable union of such sets.

\begin{proof}[Proof of (iii) $\Rightarrow$ (i) of \thref{maintheorem}] Let $\Theta = BS_\nu$, where $B$ is a Blaschke product, and $S_\nu$ is a singular inner function with corresponding singular measure $\nu$. Suppose that the Korenblum-Roberts part $\nu_{\K}$ vanishes. Then it follows from \thref{Decomp} that there exists a sequence of Beurling-Carleson sets $\{E_n\}_{n}$, such that $\nu(E_n) \to \nu(\T)$ as $n \to \infty$. Let $\{a_n\}_{n\geq 1}$ be the sequence of zeros of $B$,$B_n$ be the finite Blaschke product with zeros at $a_1, \ldots, a_n$ and $S_{\nu_n}$ be the singular inner function defined by the measure $\nu_n$ which is the restriction of $\nu$ to the Beurling-Carleson set $E_n$. For $\Theta_n = B_nS_{\nu_n}$, we have that $K_{\Theta_n} \subset K_\Theta$, because $\Theta_n$ divides $\Theta$. Moreover, we have \begin{gather*} \int_\T |\Theta_n -  \Theta|^2 d\m = \int_\T |1 - (B/B_n) S_{\nu - \nu_n}|^2 dm \\ = 2 - 2 \Re \int_\T (B/B_n)S_{\nu - \nu_n} d\m \\ = 2 - 2 (\prod_{k \geq n+1} |a_k|)\big(\nu(\T) - \nu_n(\T)\big). \end{gather*} The last expression clearly tends to zero as $n \to \infty$, and thus we conclude that $\lim_{n \to \infty} \|\Theta_n - \Theta\|_{2} \to 0$. By passing to a subsequence, we can assume that $\Theta_n$ converges to $\Theta$ pointwise almost everywhere on $\T$. Let $P_{K_\Theta}$ denote the orthogonal projection from $H^2$ onto $K_\Theta$. It is easy to see that this projection is given by \[ P_{K_\Theta}f = f - \Theta P_+ \conj{\Theta} f,\] where $P_+$ is the Riesz projection. Let $P_{K_{\Theta_n}}$ denote the corresponding projection onto $K_{\Theta_n}$. Then, for $f \in K_\Theta$ we have that 
\begin{gather*} P_{K_{\Theta}}f - P_{K_{\Theta_n}}f  = f - P_{K_{\Theta_n}}  = \Theta_n P_+ \conj{\Theta_n} f \\ = \Theta_n P_+ (\conj{\Theta_n - \Theta})f + \Theta_n P_+\conj{\Theta} f \\ = \Theta_n P_+(\conj{\Theta_n - \Theta}) f.
\end{gather*}
Because $|\Theta_n| = 1$ on $\T$, the dominated convergence theorem implies now that the sequence of projections $P_{K_{\Theta_n}}$ converges in the strong operator topology to the identity on $K_\Theta$. 

Let $f \in K_\Theta$ be an arbitrary function. By the above development, for any $\epsilon > 0$ there exists a function $f_\epsilon \in K_\Theta$ which is contained in a subspace $K_{\Theta_\epsilon} \subset K_\Theta$ to which \thref{smoothseq} applies and such that $\|f - f_\epsilon\|_{2} < \epsilon/2$. By \thref{smoothseq}, there exists a sequence of Toeplitz operators $\{T_n\}_{n \geq 1}$ such that $T_n f_\epsilon \in K_{\Theta} \cap \A^\infty$, and such that $\|f_\epsilon - T_n f_\epsilon\|_{2} < \epsilon/2$ for sufficiently large $n$. Thus $\|f - T_nf_\epsilon\|_{2} < \epsilon$, and $\A^\infty \cap K_\Theta$ is dense in $K_\Theta$. 

\end{proof}

\subsection{Proof of the necessity}

Finally, we prove the necessity of the structure of the singular measure to ensure smooth approximations being possible. In contrast to previous section, our approach is non-constructive and we use duality.

\begin{proof}[Proof of (ii) $\Rightarrow$ (iii) of \thref{maintheorem}]

Let $\Theta= B S_\nu $ be an inner function such that (iii) fails, that is, the Korenblum-Roberts part $\nu_\K$ is not identically zero. Now factorize $\Theta= \Theta_\Ca S_\K $, where $S_{\K}= S_{\nu_\K}$ and $\Theta_{\Ca}= B S_{\nu_\Ca}$ and observe that a simple computation involving reproducing kernels establishes the orthogonal decomposition
\[ K_\Theta = K_{\Theta_{\Ca}} \oplus \Theta_{\Ca} K_{S_\K}.
\] Our aim is to prove that no non-zero function belonging to the subspace $\Theta_\Ca K_{S_\K}$ can be approximated by $W^{1,p}_a \cap K_\Theta$, for any $p>1$. To this end, let $p>1$ be arbitrary and fixed. Since $\nu_{\K}(E)=0$ for every Beurling-Carleson set $E\subset \T$, the Korenblum-Roberts Theorem (see Theorem 1 in \cite{korenblum1981cyclic} or Theorem 2 in \cite{roberts1985cyclic}) implies that the singular inner function $S_\K$ is a cyclic vector on the Bergman spaces $L^{q}_{a}$, for any $q>1$. That is, the subspace $S_{\K} L^{q}_{a}$ is dense in $L^{q}_{a}$. Now let $f\in H^\infty \cap K_{S_\K} \subset L^q_a$ and pick a sequence of analytic polynomials $\{p_n \}_{n\geq 1}$ such that $S_{\K} p_n \rightarrow f$ in $L^q_a$, as $n \to \infty$. Since multiplication with the bounded analytic function $\Theta_{\Ca}$ is a continuous operation on $L^q_a$, we have that $\Theta p_n \to \Theta_{\Ca} f$ in $L^q_a$. Observe that the limit function $\Theta_{\Ca}f$ belongs to $K_\Theta$, since $\Theta_\Ca K_{S_{\K}} \subset K_\Theta$. According to \thref{duall}, the Cauchy dual-pairing with an $L^{q}_a$-function induces a bounded linear functional on $W^{1,p}_a$, where $p>1$ satisfies $p=q/(q-1)$. In particular, this implies that for any $g\in W^{1,p}_a \cap K_{\Theta}$ we have the following
\[ \int_{\T} \Theta_{\Ca}(\zeta) f(\zeta) \overline{g(\zeta)} dm(\zeta)= \lim_{n \rightarrow \infty} \int_{\T} \Theta(\zeta) p_n(\zeta) \overline{g(\zeta)} dm(\zeta) = 0.
\]
The last equality follows from the fact that $g$ is orthogonal to $\Theta p_{n}$, for each $n\geq1$. This shows that $\Theta_{\Ca} f \in (W^{1,p}_a \cap K_\Theta )^\perp$ for any bounded function $f \in K_{S_\K}$, hence we actually get the inclusion $\Theta_\Ca K_{S_\K} \subseteq (W^{1,p}_a \cap K_\Theta )^\perp$. This obviously implies that $W^{1,p}_a \cap K_ \Theta$ is not dense in $K_{\Theta}$, for any $p > 1$. 
\end{proof}

\bibliographystyle{siam}
\bibliography{mybib}

\Addresses

\end{document}